\definecolor{webgreen}{rgb}{0,.5,0}
\definecolor{webbrown}{rgb}{.6,0,0}
\theoremstyle{plain}
\newtheorem{thm}{Theorem}
\newtheorem{lemma}{Lemma}
\newtheorem{idn}{Identity}
\newtheorem{cor}{Corollary}
\newcommand{\rb}[1]{_{\rm #1}}
\newcommand{\ch}[2]{\begin{bmatrix}#1\\ #2\end{bmatrix}}
\newcommand{\tch}[2]{[\begin{smallmatrix}#1\\ #2\end{smallmatrix}]}
\newcommand{\chb}[2]{\left\langle\begin{matrix}#1\\#2\end{matrix}\right\rangle}
\newcommand{\tchb}[2]{\langle\begin{smallmatrix}#1\\#2\end{smallmatrix}\rangle}
\newcommand{\floor}[1]{\lfloor#1\rfloor}
\newcommand{\ceil}[1]{\lceil#1\rceil}
\newcommand{\seqnum}[1]{\href{http://oeis.org/#1}{\underline{#1}}}
\begin{document}

\begin{center}
\vskip 1cm{\LARGE\bf
New Combinatorial Interpretations of the \\
\vskip .01in
Fibonacci Numbers Squared, Golden \\
\vskip .01in
Rectangle Numbers, and Jacobsthal  Numbers \\
\vskip .08in
Using Two Types of Tile
}
\vskip 1cm
\large
Kenneth Edwards and Michael A. Allen\footnote{Corresponding author.}\\
Physics Department\\
Faculty of Science\\
Mahidol University\\
Rama 6 Road\\
Bangkok 10400  \\
Thailand \\
\href{mailto:kenneth.edw@mahidol.ac.th}{\tt kenneth.edw@mahidol.ac.th} \\
\href{mailto:maa5652@gmail.com}{\tt maa5652@gmail.com} \\
\end{center}

\vskip .2 in

\begin{abstract}
We consider the tiling of an $n$-board (a board of size $n\times1$)
with squares of unit width and $(1,1)$-fence
tiles. A $(1,1)$-fence tile is composed of two unit-width square
subtiles separated by a gap of unit width. We show that the number of
ways to tile an $n$-board using unit-width squares and $(1,1)$-fence
tiles is equal to a Fibonacci number squared when $n$ is even and a
golden rectangle number (the product of two consecutive Fibonacci
numbers) when $n$ is odd. We also show that the number of tilings of
boards using $n$ such square and fence tiles is a Jacobsthal number.  Using
combinatorial techniques we prove identities involving sums of
Fibonacci and Jacobsthal numbers in a straightforward
way. Some of these identities appear to be new.  We also construct and
obtain identities for a known Pascal-like triangle (which has
alternating ones and zeros along one side) whose $(n,k)$th entry is
the number of tilings using $n$ tiles of which $k$ are fence
tiles. There is a simple relation between this triangle and the
analogous one for tilings of an $n$-board.  Connections between the
triangles and Riordan arrays are also demonstrated. With the help of the
triangles, we express the Fibonacci numbers squared, golden rectangle
numbers, and Jacobsthal numbers as double sums of products of two
binomial coefficients.
\end{abstract}

\section{Introduction}
The $(n+1)$th Fibonacci number (\seqnum{A000045}), defined by
$F_{n+1}=\delta_{n,1}+F_n+F_{n-1}$, $F_{n<1}=0$, where $\delta_{i,j}$
is 1 if $i=j$ and zero otherwise, can be interpreted as the number of
ways to tile an $n$-board (a board of size $n\times1$ composed of
$1\times1$ cells) with $1\times1$ squares (henceforth referred to
simply as squares) and $2\times1$ dominoes \cite{BCCG96,BQ=03}. More
generally, the number of ways to tile an $n$-board with all the
$r\times1$ $r$-ominoes from $r=1$ up to $r=k$ is the $k$-step (or
$k$-generalized) Fibonacci number
$F^{(k)}_{n+1}=\delta_{n,1}+F^{(k)}_{n}+F^{(k)}_{n-1}+\cdots+F^{(k)}_{n-k+1}$,
with $F^{(k)}_{n<1}=0$ \cite{BQ=03}.  Edwards \cite{Edw08} showed that
it is possible to obtain a combinatorial interpretation of the
tribonacci numbers (the 3-step Fibonacci numbers, \seqnum{A000073}) as
the number of tilings of an $n$-board using just two types of tiles,
namely, squares and $(\frac12,1)$-fence tiles.  A $(w,g)$-fence tile
is composed of two subtiles (called \textit{posts}) of size
$w\times1$ separated by a gap of size $g\times1$. We presented a
bijection between the Fibonacci numbers squared (\seqnum{A007598}) and
the tilings of an $n$-board with half-squares (i.e., $\frac12\times1$
tiles always oriented so that the shorter side is horizontal) and
$(\frac12,\frac12)$-fence tiles \cite{EA19} and this was used to
formulate combinatorial proofs of various identities
\cite{EA19,EA20}. We also identified a bijection between tiling an
$n$-board with $(\frac12,g)$-fence tiles where $g\in\{0,1,2,\ldots\}$
and strongly restricted permutations and then used it to obtain
results concerning the permutations \cite{EA15}.

Here we show that the number of ways to tile an $n$-board using
square and $(1,1)$-fence tiles is a Fibonacci number squared if $n$ is
even and a golden rectangle number (the product of two successive
Fibonacci numbers, \seqnum{A001654}) if $n$ is odd. We also consider
the number of ways to tile boards using a total of $n$ of these tiles and
refer to this as an \textit{$n$-tiling}.  We show that enumerating
$n$-tilings yields the Jacobsthal numbers
$J_{n\geq0}=0,1,1,3,5,11,21,43,85,171,\ldots$ (\seqnum{A001045})
where the $n$th Jacobsthal number is defined via
\begin{equation}\label{e:Jn}
J_{n}=\delta_{n,1}+J_{n-1}+2J_{n-2}, \quad J_{n<1}=0.
\end{equation}
We use both types of tiling to formulate straightforward combinatorial
proofs of identities involving the Fibonacci numbers
squared, golden rectangle numbers, and Jacobsthal numbers.  We also
obtain two Pascal-like triangles (one for $n$-tilings, the other for
tilings of an $n$-board) whose entries are the number of tilings with
squares and $(1,1)$-fences which use a given number of fences. A
number of properties of the triangles are derived including their
relation to Riordan arrays. Finally, the triangles are used to express
the Fibonacci numbers squared, golden rectangle numbers, and
Jacobsthal numbers as double sums of a product of two binomial
coefficients.

\section{Tiling boards with squares and fences}
When tiling a board with fences it is helpful to first determine the types of
metatile since any tiling of the board can be expressed as a tiling
using metatiles \cite{Edw08}.  A \textit{metatile} is an arrangement of tiles
that exactly covers an integral number of adjacent cells and cannot be
split into smaller metatiles \cite{Edw08,EA15}.  When tiling with
squares ($S$) and $(1,1)$-fence tiles (henceforth referred to simply
as fences or $F$), the simplest metatile is the square. To tile
adjacent cells by starting with a single fence we must fill the gap
with either a square or the post of another fence. These generate what
we will refer to as the \textit{filled fence} ($FS$) and \textit{bifence} ($FF$)
metatiles, respectively (Fig.~\ref{f:metatiles}).  The bifence clearly
has a length of 4.  The filled fence has length 3 and the square
inside will be called a \textit{captured square}. A square which is not
captured (and is therefore a metatile) is called a \textit{free square}.

\begin{figure}
\begin{center}
\includegraphics[width=4cm]{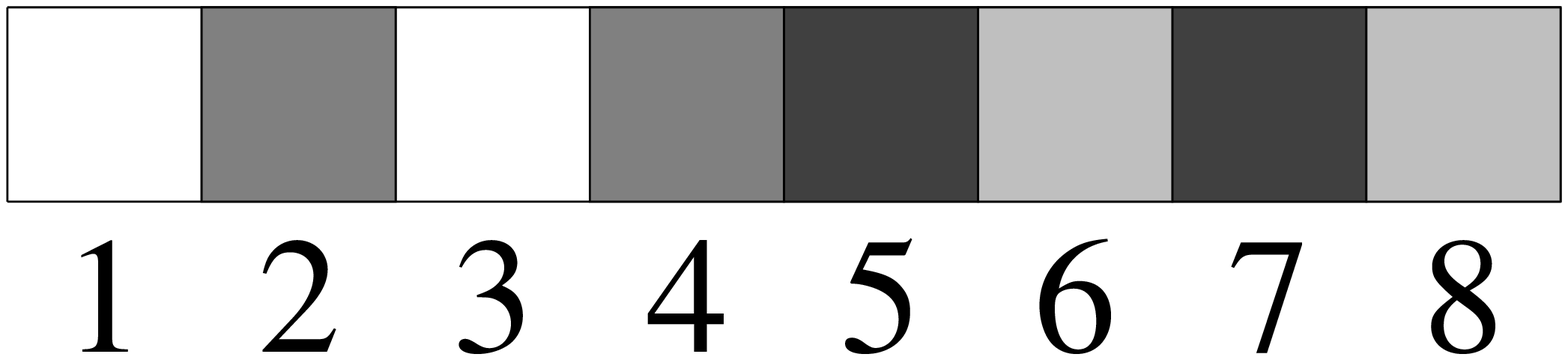}
\end{center}
\caption{An 8-board tiled with the three possible metatiles: a free
  square (cell 1), a filled fence (cells 2--4), and a bifence (cells
  5--8). The symbolic representation of this tiling is $SFSFF$.}
\label{f:metatiles}
\end{figure}

\begin{thm}\label{T:An}
Let $A_n$ be the number of ways to tile an $n$-board using squares and
fences. Then
\begin{equation}
\label{e:An}
A_{n}=\delta_{n,0}+A_{n-1}+A_{n-3}+A_{n-4}, \quad A_{n<0}=0.
\end{equation}
\end{thm}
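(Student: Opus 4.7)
The plan is a standard "condition on the leftmost metatile" argument, made possible by the fact that the authors have already identified (and named, in Figure \ref{f:metatiles}) the three metatiles that any tiling with squares and $(1,1)$-fences must decompose into.

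First I would recall from the preceding paragraph that every tiling of an $n$-board by squares and fences admits a unique decomposition into metatiles, and that the only metatiles are the free square (length $1$), the filled fence $FS$ (length $3$), and the bifence $FF$ (length $4$). This is the structural fact the proof rests on; it is argued in the text preceding the theorem by the observation that the gap of any leftmost fence must be plugged either by a free square (producing $FS$) or by the left post of a second fence (forcing $FF$ so that the gap of the second fence can be filled by the right post of the first).

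Next I would handle the initial data directly: $A_0=1$ (the empty tiling) and $A_n=0$ for $n<0$ by convention. Then, for $n\ge 1$, I would partition the set of tilings of the $n$-board according to the leftmost metatile. Tilings beginning with a free square are in bijection with tilings of the remaining $(n-1)$-board, contributing $A_{n-1}$. Tilings beginning with a filled fence are in bijection with tilings of the remaining $(n-3)$-board, contributing $A_{n-3}$. Tilings beginning with a bifence contribute $A_{n-4}$. Adding these gives $A_n=A_{n-1}+A_{n-3}+A_{n-4}$ for $n\ge 1$, and the term $\delta_{n,0}$ in \eqref{e:An} exactly accounts for the base case $A_0=1$ (since all the $A$-terms on the right vanish when $n=0$). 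The convention $A_{n<0}=0$ ensures the formula is correct for small $n\ge 1$ as well, where some of $n-1,n-3,n-4$ may be negative.

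The only genuinely non-routine step is justifying the metatile classification, and that has already been done in the paragraph preceding the theorem; once it is invoked, the recurrence is immediate. Hence I do not anticipate any real obstacle. I would end with a short sanity check computing $A_1=A_2=1$, $A_3=2$, $A_4=4$ directly from the recurrence to confirm the formula, which also matches the obvious enumerations ($S$; $SS$; $SSS, FS$; $SSSS, SFS, FSS, FF$).
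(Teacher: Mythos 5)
Your proof is correct and is essentially the paper's argument: the paper conditions on the last metatile (length $1$, $3$, or $4$) rather than the leftmost one, which is the same decomposition up to left--right reflection, and your treatment of the $\delta_{n,0}$ term and the convention $A_{n<0}=0$ matches the paper's.
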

\begin{proof}
We condition on the last metatile \cite{BHS03,EA15}. If the
last metatile is of length $l$ there will be $A_{n-l}$ ways to tile
the remaining $n-l$ cells. The result \eqref{e:An} follows from the
fact that there are three possible metatiles and these have lengths of
1, 3, and 4.  If $n=l$ there is exactly one tiling (which corresponds to
that metatile filling the entire board) so we make $A_0=1$. There is no
way to tile an $n$-board if $n<l$ and so $A_{n<0}=0$.
\end{proof}
$A_{n\ge0}=1,1,1,2,4,6,9,15,25,40,64,104,169,273,441,714,1156,\ldots$
is \seqnum{A006498}. As we will shortly prove combinatorially, the
even (odd) terms of this sequence are the Fibonacci numbers squared
\seqnum{A007598} (golden rectangle numbers \seqnum{A001654}).

\begin{lemma}\label{L:bij}
There is a bijection between the fence-square tilings of a
$2n$-board (a $(2n+1)$-board) and the square-domino tilings of an
ordered pair of $n$-boards (an $(n+1)$-board and an $n$-board).
\end{lemma}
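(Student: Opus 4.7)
The plan is to prove Lemma~\ref{L:bij} by constructing an explicit bijection whose whole idea rests on a single observation: the two posts of a $(1,1)$-fence sit on cells of the same parity (they are separated by exactly one gap cell). Consequently, in any fence-square tiling of a board, each individual tile---square or fence, free or captured---lies entirely within one parity class of cells.

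For a $2n$-board I would split its cells into the $n$ odd-indexed cells $1,3,\ldots,2n-1$ and the $n$ even-indexed cells $2,4,\ldots,2n$, and relabel each class as a separate $n$-board (an \emph{odd sub-board} and an \emph{even sub-board}) via $2k-1\mapsto k$ and $2k\mapsto k$ respectively. The forward map then sends every square at cell $i$ to a square at position $\lceil i/2\rceil$ on the sub-board of matching parity, and every fence with posts at cells $i,i+2$ to a domino at positions $\lceil i/2\rceil,\lceil i/2\rceil+1$ of the matching sub-board. Because the posts of a fence project to adjacent cells of the sub-board, the image really is a square-domino tiling of each $n$-board, and altogether an ordered pair of such tilings.

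The step requiring the most care is verifying that this map is invertible. Given any ordered pair $(U,V)$ of square-domino tilings of two $n$-boards, I would reconstruct a candidate fence-square tiling by reversing the recipe, and then verify three items: (i) no two reconstructed tiles overlap, which is immediate since tiles derived from $U$ occupy only odd cells of the $2n$-board and those from $V$ only even cells; (ii) every cell is covered exactly once, since each sub-board covers its parity class completely; and (iii) the gap cell of every reconstructed fence is legitimately filled---this is automatic, because a fence arising from a domino in $U$ has its gap at an even cell, which is covered by $V$, and symmetrically for $V$. The odd-board case requires no new ideas: a $(2n+1)$-board has $n+1$ odd-indexed and $n$ even-indexed cells, so the same splitting produces an $(n+1)$-board paired with an $n$-board.

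I expect the main obstacle to be organizational rather than conceptual: one must state clearly that the gap of each fence is automatically accounted for by the opposite-parity tiling, not an extra constraint the inverse map must enforce by hand. Once this point is made explicit, the bijection is complete, and combining it with the classical fact that the number of square-domino tilings of an $n$-board is $F_{n+1}$ will immediately yield the values $A_{2n}=F_{n+1}^{2}$ and $A_{2n+1}=F_{n+1}F_{n+2}$ promised in the introduction.
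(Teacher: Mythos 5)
Your proposal is correct and follows essentially the same route as the paper: split the board's cells by parity, map each parity class to its own board (relabelling $2k-1\mapsto k$ and $2k\mapsto k$), send squares to squares and the two same-parity posts of each fence to a domino, and invert by splicing. The extra care you take with the inverse (overlap, coverage, and the gap cell being filled by the opposite-parity tiling) is a more explicit version of the paper's one-line ``reversed by splicing'' remark, not a different argument.
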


\begin{proof}
Tile an $n$-board (an $(n+1)$-board) with the contents of the
odd-numbered cells of the given $2n$-board ($(2n+1)$-board)
fence-square tiling and tile
a second $n$-board (an $n$-board) with the contents of
the even-numbered cells. The posts of any fence (which always lie on two
consecutive odd or even cells) get mapped to a domino. The procedure
is reversed by splicing the two square-domino tilings.
\end{proof}

\begin{thm}\label{T:A2n} For $n\ge0$,
\begin{subequations}
\label{e:Aff}
\begin{align}
\label{e:A2n}
A_{2n}&=f_n^2, \\
A_{2n+1}&=f_nf_{n+1},
\label{e:A2n+1}
\end{align}
\end{subequations}
where $f_n=F_{n+1}$.
\end{thm}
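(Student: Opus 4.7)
The plan is to invoke Lemma~\ref{L:bij} directly: both identities reduce to counting the two components of the bijection separately. The only fact I need in addition to the lemma is the standard combinatorial interpretation of $F_{n+1}$ (mentioned in the Introduction), namely that $f_n=F_{n+1}$ equals the number of ways to tile an $n$-board with squares and dominoes.

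Step by step, for the even case $A_{2n}$, I would apply Lemma~\ref{L:bij} to obtain a bijection between fence-square tilings of the $2n$-board and \emph{ordered pairs} of square-domino tilings of $n$-boards. Because the two $n$-boards in the pair are tiled independently, the product rule gives $A_{2n}=f_n\cdot f_n=f_n^2$, establishing \eqref{e:A2n}. For the odd case $A_{2n+1}$, the same lemma gives a bijection with ordered pairs consisting of a square-domino tiling of an $(n+1)$-board and a square-domino tiling of an $n$-board, so independence yields $A_{2n+1}=f_{n+1}f_n$, establishing \eqref{e:A2n+1}.

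There is essentially no obstacle, since Lemma~\ref{L:bij} has already done the combinatorial work. The only point worth stressing in the writeup is that the bijection in the lemma is a true bijection onto \emph{ordered} pairs (the odd-indexed cells form the first board, the even-indexed cells the second), so the product rule applies without overcounting, and that the two factors are independent because any square-domino tilings of the two component boards can be spliced back into a valid fence-square tiling of the original board. Thus the proof is a two-line invocation of Lemma~\ref{L:bij} together with the known count $f_n=F_{n+1}$ of square-domino tilings.
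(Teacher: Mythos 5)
Your proposal is correct and is essentially identical to the paper's own proof: both invoke Lemma~\ref{L:bij} and then count ordered pairs of square-domino tilings via the product rule, using $f_n=F_{n+1}$ as the number of square-domino tilings of an $n$-board. No gaps; the extra remark about the pairs being ordered and independent is exactly the point the paper's proof relies on implicitly.
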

\begin{proof}
There are $f_{n}$ ways to tile an $n$-board using squares and
dominoes \cite{BQ=03}. From Lemma~\ref{L:bij}, $A_{2n}$ is the same as
the number of ways to tile an ordered pair of $n$-boards using squares
and dominoes which is $f_{n}^2$, and $A_{2n+1}$ is the same as
the number of ways to tile an $n$-board and $(n+1)$-board using squares
and dominoes which is $f_nf_{n+1}$.
\end{proof}

As it is easily done and the result will be used in future work, in
the following theorem we generalize Theorem~\ref{T:A2n} to the case of
tiling an $n$-board with squares and $(1,m-1)$-fences for some fixed
$m\in\{2,3,\ldots\}$.

\begin{thm}\label{T:Amn} If $A_n^{(m)}$ is the number of ways to tile an
  $n$-board using squares and $(1,m-1)$-fences then for $n\ge0$,
\[
A_{mn+r}^{(m)}=f_n^{m-r}f_{n+1}^r, \quad r=0,\ldots,m-1,
\]
where $f_n=F_{n+1}$.
\end{thm}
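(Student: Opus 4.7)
My plan is to generalize Lemma~\ref{L:bij}. The idea is to partition the cells of the $(mn+r)$-board into $m$ sub-boards according to residue class modulo $m$, then show that a square-fence tiling of the original corresponds bijectively to an ordered $m$-tuple of square-domino tilings of the sub-boards. The key structural observation is that a $(1,m-1)$-fence whose left post sits on cell $i$ has its right post on cell $i+m$ (since the gap occupies the $m-1$ cells $i+1,\ldots,i+m-1$), so the two posts lie in the same residue class mod $m$ and on two \emph{consecutive} cells of that class after compression.

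The first step is bookkeeping on sub-board sizes. Labeling the residue of cell $c$ by the unique $j\in\{1,\ldots,m\}$ with $c\equiv j\pmod{m}$, the residue-$j$ class consists of the cells $j,j+m,j+2m,\ldots$ that do not exceed $mn+r$. A quick check shows that for $1\le j\le r$ this class has $n+1$ cells, and for $r<j\le m$ it has $n$ cells, giving $r$ sub-boards of length $n+1$ and $m-r$ of length $n$.

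The second step is to establish the bijection. Forward: given a square-fence tiling of the $(mn+r)$-board, restrict to each residue class and compress to produce a tiling of the corresponding sub-board; free squares become squares, and each pair of fence posts becomes a domino by the observation above (a fence contributes posts entirely to one residue class, so no post is left unmatched). Reverse: given an $m$-tuple of square-domino tilings, produce a square-fence tiling by placing, for each sub-board square, a square on the corresponding original cell, and for each sub-board domino, a $(1,m-1)$-fence whose posts occupy the two corresponding original cells. Reconstructions from different residue classes never collide because their cell sets are disjoint, and every cell of the original is covered exactly once because each cell lies in exactly one residue class and is covered exactly once within that class.

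The main (minor) obstacle is just verifying this bijection cleanly, especially checking that the reverse construction always produces a legal tiling of the full board. Once the bijection is in place, the theorem follows immediately by multiplying the independent counts: using the standard fact \cite{BQ=03} that an $n$-board has $f_n$ square-domino tilings, the total is $f_{n+1}^{\,r}\,f_n^{\,m-r}$, as claimed. Lemma~\ref{L:bij} is recovered as the special case $m=2$.
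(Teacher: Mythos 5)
Your proposal is correct and is essentially the paper's own argument: the paper likewise splits the cells of the $(mn+r)$-board by residue class modulo $m$, maps the two posts of each $(1,m-1)$-fence (which lie on consecutive cells of the same residue class) to a domino, and reverses the map by splicing the $m$-tuple of square-domino tilings, yielding $f_{n+1}^r f_n^{m-r}$. Your extra bookkeeping on the sub-board lengths is a fine, slightly more explicit rendering of the same bijection.
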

\begin{proof}
We identify the following bijection between the tilings of a
$(mn+r)$-board using squares and $(1,m-1)$-fences and the
square-domino tilings of an ordered $m$-tuple of $r$ $(n+1)$-boards
followed by $m-r$ $n$-boards. For convenience we number the boards in
this $m$-tuple from 0 to $m-1$ and the cells in the $(mn+r)$-board
from 0 to $mn+r-1$.  Tile board $j$ in the $m$-tuple with the contents
(taken in order) of the cells of the given $(mn+r)$-board fence-square
tiling whose cell number modulo $m$ is $j$. The posts of any
$(1,m-1)$-fence (which will always lie on two consecutive cells with
the same cell number modulo $m$) get mapped to a domino in board
$j$. The procedure is reversed by splicing the square-domino tilings
of the $m$-tuple of boards, hence establishing the bijection. The
number of square-domino tilings of the $m$-tuple of boards is
$f_{n+1}^rf_n^{m-r}$ and the result follows.
\end{proof}

\begin{thm}\label{T:bijJ}
If $B_n$ is the number of $n$-tilings using squares and
fences then
\begin{equation}\label{e:bijJ}
B_n=J_{n+1}.
\end{equation}
\end{thm}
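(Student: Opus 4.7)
The plan is to show $B_n$ satisfies the same recurrence as $J_{n+1}$ with matching initial conditions, mirroring the strategy used in the proof of Theorem~\ref{T:An} but now conditioning on the number of \emph{tiles} in the last metatile rather than the number of cells.

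First I would catalogue the metatiles by tile-count: a free square $S$ consists of $1$ tile, a filled fence $FS$ consists of $2$ tiles, and a bifence $FF$ also consists of $2$ tiles. Conditioning on the last metatile of an $n$-tiling, removing it leaves an $(n-1)$-tiling (if the last metatile is $S$) or an $(n-2)$-tiling (if the last metatile is $FS$ or $FF$). Summing over the three cases gives
\begin{equation*}
B_n = B_{n-1} + 2B_{n-2} \qquad (n\ge 2).
\end{equation*}
The initial conditions $B_0=1$ (the empty tiling) and $B_1=1$ (a single free square on a $1$-board) are immediate.

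Next I would compare with the Jacobsthal recurrence \eqref{e:Jn}. Writing $C_n:=J_{n+1}$, we have $C_0=J_1=1$, $C_1=J_2=1$, and for $n\ge 2$, \eqref{e:Jn} applied at index $n+1$ yields $C_n = C_{n-1}+2C_{n-2}$ (the Kronecker delta no longer contributes once $n+1\ge 2$, i.e.\ $n\ge 1$, and since $C_1$ is already verified directly we only need $n\ge 2$). Thus $B_n$ and $J_{n+1}$ satisfy the same second-order linear recurrence with the same two initial values, so by induction on $n$ they agree for every $n\ge 0$.

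There is no real obstacle here; the only subtlety is bookkeeping the index shift between $B_n$ and $J_{n+1}$, since the Jacobsthal recurrence as stated in \eqref{e:Jn} contains the $\delta_{n,1}$ source term, whereas the $B_n$ recurrence does not. Making the shift $C_n=J_{n+1}$ explicit, and separately checking the two base cases against $J_1$ and $J_2$, cleanly absorbs that discrepancy.
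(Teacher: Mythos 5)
Your proposal is correct and follows essentially the same route as the paper: condition on the last metatile, note the tile counts $1$, $2$, $2$ to get $B_n=B_{n-1}+2B_{n-2}$, and match this against the Jacobsthal recurrence \eqref{e:Jn}. The only cosmetic difference is that you verify the base cases $B_0=B_1=1$ explicitly where the paper encodes them via the $\delta_{n,0}$ term in \eqref{e:Bn}.
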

\begin{proof}
As in the proof of Theorem~\ref{T:An}, we condition on
the last metatile. If the last metatile contains $m$ tiles, there are
$B_{n-m}$ possible $(n-m)$-tilings. As the three possible metatiles
contain 1, 2, and 2 tiles we have
\begin{equation}
\label{e:Bn}
B_{n}=\delta_{n,0}+B_{n-1}+2B_{n-2}, \quad B_{n<0}=0.
\end{equation}
where the $\delta_{n,0}$ is to ensure that $B_0=1$ so that when an
$n$-tiling is just one metatile we count precisely one
tiling. Comparing \eqref{e:Bn} with \eqref{e:Jn} gives the result.
\end{proof}

\section{Combinatorial proofs of identities involving the Fibonacci squares and golden rectangle numbers}

The proofs of Identities \ref{I:Sf2}, \ref{I:Sff}, and \ref{I:A2n-1}
(and of Identities \ref{I:SJ}, \ref{I:SJ2n}, \ref{I:SJ2n-1}, and
\ref{I:JJ} in the next section) follow the techniques of Benjamin and
Quinn \cite{BQ=03}.

\begin{idn}\label{I:Sf2}
For $n\ge0$,
\[
\sum_{j=0}^nf_j^2=f_nf_{n+1}.
\]
\end{idn}
\begin{proof}
How many tilings of a $(2n+1)$-board are there? Answer~1:
$A_{2n+1}=f_nf_{n+1}$. Answer~2: condition on the position of the last
metatile which is not a bifence. The tiles after this metatile must be
bifences and thus occupy $4m$ cells $(0\leq m\leq\floor{n/2})$. If the
last non-bifence metatile is a square there are
$A_{2n+1-4m-1}=f_{n-2m}^2$ tilings. If it is a filled fence there are
$A_{2n+1-4m-3}=f_{n-1-2m}^2$ tilings. Summing over all possible values
of $m$ for both cases gives the result.
\end{proof}

\begin{idn}\label{I:Sff}
For $n\ge0$,
\[
\sum_{j=0}^{2n-1}f_jf_{j+1}=f_{2n}^2-1.
\]
\end{idn}
\begin{proof}
How many tilings of a $4n$-board use at least 1 square? Answer~1:
$A_{4n}-1=f_{2n}^2-1$ since the only way to tile a $4n$-board without
using squares is to use only bifences. Answer~2: condition on the
position of the last metatile which is not a bifence. This gives
$A_{4n-4m-1}=f_{2n-2m}f_{2n-2m-1}$ tilings if the last non-bifence
metatile is a square and $A_{4n-4m-3}=f_{2n-2m-1}f_{2n-2m-2}$ if it is
a filled fence for $0\leq m<n$.  Summing over $m$ for both cases gives
the result.
\end{proof}

\begin{idn}\label{I:A2n-1}
\begin{subequations}
\label{e:A2n-1}
\begin{align}
\label{e:A2n-1e}
2\sum_{j=0}^{n-2}f_jf_{j+2}-f_{n-2}f_{n-1}
&=f_n^2-1, & n>1,\\
2\sum_{j=0}^{n-2}f_jf_{j+2}+f_{n-1}^2
&=f_nf_{n+1}-1, & n>0.
\label{e:A2n-1o}
\end{align}
\end{subequations}
\end{idn}
\begin{proof}
How many tilings of an $m$-board use at least 1 fence?
Answer~1: $A_m-1$ since
only the tiling using just squares does not use any fences.
Answer~2: condition on the location of the last fence. If the last
fence is in a metatile of length $l$ starting at cell $k+1$ (where $0\leq
k\leq m-l$) there will be $A_k$ possible tilings since the cells after
the metatile must all be occupied by squares.
 The two metatiles
containing fences are of length 3 and 4. Summing numbers of tilings
over all positions and types of the last fence-containing metatile and
equating the two answers gives
\[
2\sum_{k=0}^{m-4}A_k+A_{m-3}=A_m-1.
\]
Replacing $m$ in this by $2n$ and $2n+1$ leads to \eqref{e:A2n-1e} and
\eqref{e:A2n-1o}, respectively, after using \eqref{e:Aff} and
$f_j^2+f_jf_{j+1}=f_jf_{j+2}$.
\end{proof}

The proofs of Identities \ref{I:Sjff} and \ref{I:sumbinff} (and of
Identities \ref{I:sumkJ}, \ref{I:sumbinJ}, and \ref{I:sum2km5J} in the
next section) follow the techniques used in \cite{BCS19,EA20}. As far
as we know, all these identities are new.

\begin{idn}\label{I:Sjff} For $n\ge0$,
\[
f_nf_{n+1}=1+\floor{n/2}+\sum_{j=1}^njf_{n-j}f_{n-j+1}.
\]
\end{idn}
\begin{proof}
How many tilings of a $(2n+1)$-board contain at least two squares?
Answer~1: $A_{2n+1}-\frac12n-1$ ($A_{2n+1}-\frac12(n+1)$) if $n$ is
even (odd) since the only possible tilings with less than 2 squares
when $n$ is even (odd) is one free square (filled fence) among $n/2$
($\frac12(n-1)$) bifences and there are $\frac12n-1$ ($\frac12(n+1)$)
such tilings.  Answer~2: condition on the location of the second
square. The metatile containing this must end on an even cell,
$2j$. Written in terms of symbols (see the caption to
Fig.~\ref{f:metatiles}), the tiling of the first $2j$ cells must end
in $S$. This leaves one $S$ that may be placed anywhere among the $F$
symbols which number $j-1$. The number of ways to tile the cells to
the right of the $2j$th cell is $A_{2n+1-2j}$. Summing over all
possible $j$ gives $\sum_{j=1}^njA_{2(n-j)+1}$. Equating this to
Answer~1 and simplifying gives
\[
A_{2n+1}-\floor{n/2}-1=\sum_{j=1}^njA_{2(n-j)+1}.
\]
The identity follows from \eqref{e:A2n+1}.
\end{proof}
Note that if we consider the tilings of a $2n$-board that contain at
least two squares we obtain Identity~2.1 of \cite{EA20}.

To generalize Identity~\ref{I:Sjff} we first define $C_n^{(r)}$ as the number
of ways to tile a $(2n+1)$-board using $2r+1$ squares (and $n-r$
fences).

\begin{lemma}\label{L:Cq}
For $n\ge r\ge0$,
\begin{equation}\label{e:Cq}
C^{(r)}_n=C^{(r)}_{n-2}+\binom{n+r}{2r}, \quad C^{(r)}_{n<0}=0.
\end{equation}
\end{lemma}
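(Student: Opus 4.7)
My plan is to prove \eqref{e:Cq} by conditioning on the last metatile of the tilings counted by $C_n^{(r)}$. Since the only metatiles are $S$, $FS$, and $FF$ (see Fig.~\ref{f:metatiles}), these tilings split into those whose last metatile is a bifence and those whose last metatile is either a free square or a filled fence.

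If the last metatile is a bifence, removing its four cells leaves a tiling of a $(2n-3)$-board that still uses $2r+1$ squares but only $n-r-2$ fences; this case therefore contributes $C_{n-2}^{(r)}$ tilings, with the convention $C_{n<0}^{(r)}=0$ absorbing degenerate parameter ranges.

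The substantive step is to show that the tilings whose last metatile is $S$ or $FS$ number exactly $\binom{n+r}{2r}$. For this I plan to exploit the symbolic representation introduced in Fig.~\ref{f:metatiles}, viewing a tiling as a word in the alphabet $\{S,F\}$ whose letters list the individual tiles from left to right. Since every tiling counted by $C_n^{(r)}$ uses $(2r+1)+(n-r)=n+r+1$ tiles, such a word has length $n+r+1$ and contains $2r+1$ letters $S$ and $n-r$ letters $F$. Greedy left-to-right parsing (with rules $S\to$ free square, $FS\to$ filled fence, $FF\to$ bifence) produces a unique metatile decomposition unless the word terminates with an $F$ that has no successor; consequently every word of this length and letter count ending in $S$ encodes a valid tiling, and that tiling's last metatile is automatically $S$ or $FS$. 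Deleting the terminal $S$ then puts these words in bijection with the unrestricted words of length $n+r$ containing $2r$ $S$'s and $n-r$ $F$'s, of which there are $\binom{n+r}{2r}$.

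The step I expect to require the most care is verifying that appending $S$ to an arbitrary word in $\{S,F\}^{n+r}$ always yields a decodable symbolic string: a dangling terminal $F$ in the original word becomes the leading post of a filled fence once $S$ is appended, while a terminal $S$ simply becomes an additional free square. Once this reversibility is clear, the two cases combine to give \eqref{e:Cq}.
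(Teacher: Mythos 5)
Your proof is correct and follows essentially the same route as the paper: condition on whether the symbolic string ends in $S$ (last metatile a free square or filled fence, giving $\binom{n+r}{2r}$ arrangements of the remaining $2r$ squares and $n-r$ fences) or in $FF$ (giving $C^{(r)}_{n-2}$). Your extra care in checking that every word ending in $S$ decodes to a valid tiling just makes explicit what the paper leaves implicit.
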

\begin{proof}
In symbolic form, a tiling can end in either $S$ or $FF$. If $S$, the
number of ways to place the remaining $2r$ squares and $n-r$ fences is
$\tbinom{n+r}{2r}$. If $FF$, there are $C_{n-2}^{(r)}$ ways to place
the remaining tiles. There are no tilings if $n<0$.
\end{proof}

As will be shown in Theorem~\ref{T:Rnk}, $C^{(r)}_n$ is the $(n,r)$th
element of the $(1/[(1-x)(1-x^2)],x/(1-x)^2)$ Riordan array
(\seqnum{A158909}).

\begin{idn}\label{I:sumbinff} For $p>0$,
\[
f_nf_{n+1}=\sum_{r=0}^{p-1}C^{(r)}_n
+\sum_{j=p}^{n}\binom{j+p-1}{2p-1}f_{n-j}f_{n+1-j}.
\]
\end{idn}
\begin{proof}
How many tilings of a $(2n+1)$-board have at least $2p$ squares?
Answer~1: the total number of tilings minus the tilings
that contain less than $2p$ squares, i.e.,
\[
A_{2n+1}-\sum_{r=0}^{p-1}C^{(r)}_n.
\]
Answer~2: we condition on the location of the $2p$th square. If the
metatile containing this lies on the $2j$th cell, in the symbolic
representation, there are $2p-1$ $S$ and $j-p$ $F$ that precede the
$2p$th $S$ and hence $\binom{j+p-1}{2p-1}$ ways to arrange them. There
are $A_{2n+1-2j}$ ways to place the remaining tiles after the $2j$th
cell.  Summing over all possible $j$ and equating the result to
Answer~1 gives
\[
A_{2n+1}-\sum_{r=0}^{p-1}C^{(r)}_n=\sum_{j=p}^n\binom{j+p-1}{2p-1}A_{2(n-j)+1},
\]
and the identity follows from \eqref{e:A2n+1}.
\end{proof}

\section{Combinatorial proofs of identities involving the Jacobsthal numbers}

\begin{idn}\label{I:SJ}
For $n\ge0$,
\[
2\sum_{r=1}^nJ_r=J_{n+2}-1.
\]
\end{idn}
\begin{proof}
How many $(n+1)$-tilings use at least 1 fence? Answer~1:
$B_{n+1}-1$. Answer~2:
condition on the last metatile containing a fence. If this last
metatile contains the $(j+1)$th and $(j+2)$th tiles ($0\le j\le n-1$)
then there remain $j$ unspecified tiles. As there are two types of
metatile containing a fence there are a total of $2B_{j}$ tilings for
each $j$. Summing over $j$ we have
\[
\sum_{j=0}^{n-1} 2B_j=B_{n+1}-1,
\]
and the identity follows from Theorem~\ref{T:bijJ}.
\end{proof}

\begin{idn}\label{I:SJ2n}
For $n\ge0$,
\[
\sum_{r=1}^{2n}J_r=J_{2n+1}-1.
\]
\end{idn}
\begin{proof}
How many $2n$-tilings use at least 1 square? Answer~1:
$B_{2n}-1$ since there is only one $2n$-tiling
without a square (the all-bifence tiling). Answer~2:
condition on the last square which must be the
$2(n-m)$th tile ($0\le m\le n-1$) since any
metatiles after the last square are
bifences. If this last square is a free square there are $2n-2m-1$
remaining unspecified tiles. There are $2n-2m-2$ if it
is inside a filled fence. Summing over all possible $m$ and equating
to Answer~1 gives
\[
B_{2n}-1=\sum_{r=0}^{2n-1}B_r.
\]
The identity follows from \eqref{e:A2n+1}.
\end{proof}

\begin{idn}\label{I:SJ2n-1}
For $n\ge0$,
\[
\sum_{r=1}^{2n-1}J_r=J_{2n}.
\]
\end{idn}
\begin{proof}
The proof follows that for Identity~\ref{I:SJ2n} but we count the
number of $(2n-1)$-tilings by conditioning on the last square.
\end{proof}

\begin{idn}\label{I:JJ} For $m,n\ge0$,
\[
J_{m+n+1}=J_{m+1}J_{n+1}+2J_mJ_n.
\]
\end{idn}
\begin{proof}
The number of $(m+n)$-tilings is $B_{m+n}$. Of these there are
$B_mB_n$ tilings where the $m$th tile is the last tile in a
metatile. If the $m$th tile is the first tile in a metatile containing
two tiles, there are $m-1$ unspecified tiles before it and $n-1$
unspecified tiles after the metatile. As there are two kinds of
two-tile metatiles we have
$B_{m+n}=B_mB_n+2B_{m-1}B_{n-1}$.
The identity then follows from \eqref{e:A2n+1}.
\end{proof}

\begin{idn}\label{I:sumkJ} For $n\ge0$,
\[
J_{n+1}=\ceil{\tfrac12(n+1)}+\sum_{j=1}^{n-1}jJ_{n-j}.
\]
\end{idn}
\begin{proof}
How many $n$-tilings have at least two squares? Answer~1:
$B_n-\frac12(n+1)$ ($B_n-\frac12n-1$) when $n$ is odd (even) since the
possible tilings with one square when $n$ is odd (even) are one filled
fence (free square) placed among $\frac12(n-1)$ ($\frac12(n-2)$)
bifences and there are $\frac12(n+1)$ ($n/2$) such tilings, and the
only possible tiling with no squares is the all-bifence tiling which
only occurs when $n$ is even. Answer~2: condition on the second
metatile containing an $S$. The symbolic representation of the tiling
up to and including this must end in an $S$. If this $S$ is the $j$th
tile, there are $j-1$ ways to order the symbols preceding it and thus
$(j-1)B_{n-j}$ $n$-tilings. Summing over all possible $j$, equating to
Answer~1, and simplifying gives
\[
B_n-\ceil{\tfrac12(n+1)}=\sum_{j=2}^n(j-1)B_{n-j}.
\]
The identity is obtained on replacing $j$ by $j+1$ and using
Theorem~\ref{T:bijJ}.
\end{proof}

As before, we can generalize Identity~\ref{I:sumkJ}. We need the
following definition and lemma. Let $D^{(r)}_n$ be the number of
$n$-tilings that contain exactly $r$ squares. As the only tilings with
no squares are the all-bifence tilings, for $n>0$, $D^{(0)}_n$ is 1
(0) when $n$ is
even (odd). For convenience we make $D^{(0)}(0)=1$.
\begin{lemma}\label{L:Dq}
For $n\ge r>0$,
\begin{equation}\label{e:Dq}
D^{(r)}_n=D^{(r)}_{n-2}+\binom{n-1}{r-1}.
\end{equation}
\end{lemma}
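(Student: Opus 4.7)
The plan is to mirror the proof of Lemma~\ref{L:Cq}, conditioning on the ending of the $n$-tiling's symbolic representation. Since the three metatiles are $S$, $FS$, and $FF$, every valid $n$-tiling ends in either $S$ (the last metatile being a free square or a filled fence) or $FF$ (the last metatile being a bifence), and I would count each case separately.

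For tilings ending in $FF$, stripping off this bifence gives a bijection with valid $(n-2)$-tilings that still contain all $r$ squares, contributing $D^{(r)}_{n-2}$. For tilings whose final symbol is $S$, the first $n-1$ positions must contain $r-1$ copies of $S$ and $n-r$ copies of $F$ in some order. The key point, which I would justify explicitly, is that every such arrangement is automatically a valid tiling: reading the sequence greedily from the left, each $F$ must combine with its immediate successor into $FS$ or $FF$, and the only failure mode is an $F$ appearing at position $n$ that starts a new metatile with no follower, which the trailing $S$ rules out. Hence this case contributes $\binom{n-1}{r-1}$, and summing the two counts yields \eqref{e:Dq}.

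The main (mild) obstacle is precisely this parsing observation: that an otherwise arbitrary distribution of $F$'s in the first $n-1$ positions is validated by the final $S$. Once this is clear, the binomial count is immediate and the rest of the argument is a direct parallel of Lemma~\ref{L:Cq}.
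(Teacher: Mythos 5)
Your argument is correct and is essentially the paper's own proof: condition on whether the symbolic representation ends in $S$ or $FF$, getting $\binom{n-1}{r-1}$ free arrangements in the first case and $D^{(r)}_{n-2}$ in the second. The only difference is that you spell out the parsing observation (that a trailing $S$ validates any arrangement of the preceding $n-1$ symbols), which the paper leaves implicit in the phrase ``we are free to place the remaining $n-1$ tiles in any order.''
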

\begin{proof}
The symbolic representation of a tiling must end in either $S$ or
$FF$. If $S$, we are free to place the remaining $n-1$ tiles (of which
$r-1$ are squares) in any order; this gives
$\binom{n-1}{r-1}$ possibilities. If $FF$, there are
$D^{(r)}_{n-2}$ ways to place the remaining tiles.
\end{proof}
As will be shown in Theorem~\ref{T:RnkB}, $D^{(r)}_n$ is the
$(1/(1-x^2),x/(1-x))$ Riordan array (\seqnum{A059260}).

\begin{idn}\label{I:sumbinJ} For $p>0$,
\[
J_{n+1}=\sum_{r=0}^{p-1}D^{(r)}_n
+\sum_{k=p}^{n}\binom{k-1}{p-1}J_{n+1-k}.
\]
\end{idn}
\begin{proof}
How many $n$-tilings have at least $p$ squares?
Answer~1: the total number of tilings minus the tilings
that contain less than $p$ squares, i.e.,
\[
B_n-\sum_{r=0}^{p-1}D^{(r)}_n.
\]
Answer~2: we condition on the location of the $p$th square. If
it is the $k$th tile, there are $\binom{k-1}{p-1}$ ways
to place the first $k$ tiles and
$B_{n-k}$ ways to place the remaining tiles. Summing over all
possible $k$ and equating the result to Answer~1 gives
\[
B_n-\sum_{r=0}^{p-1}D^{(r)}_n=\sum_{k=p}^n\binom{k-1}{p-1}B_{n-k},
\]
and the identity follows from Theorem~\ref{T:bijJ}.
\end{proof}

\begin{idn}\label{I:sum2km5J} For $n>0$,
\[
J_{n+1}=n+J_{n-1}+\sum_{k=3}^n(2k-5)J_{n+1-k}.
\]
\end{idn}
\begin{proof}
For $n>0$, how many $n$-tilings have at least two fences?
Answer~1: $B_n-1-(n-2+1)$ since only the all-square tiling and
tilings with 1 filled fence among $n-2$ squares have less than two
fences. Answer~2: condition on the location of the second fence. If it
is the $k$th tile ($k=3,\ldots,n-1$) and part of a filled fence or the
first tile in a bifence, the first fence is part of a filled fence
among $k-3$ squares and hence there are $2(k-2)B_{n-(k+1)}$ tilings
for these cases. If the second fence is the end of bifence and is the
$k$th tile ($k=2,\ldots,n$), the tiles before the bifence are all
squares and hence there are $B_{n-k}$ tilings in this case. Summing
over all possible $k$, changing $k$ to $k-1$ in the first sum, and
equating to Answer~1 gives
\[
B_n-n=2\sum_{k=4}^n(k-3)B_{n-k}+\sum_{k=2}^nB_{n-k}
=B_{n-2}+\sum_{k=3}^n(2k-5)B_{n-k}.
\]
The identity then follows from Theorem~\ref{T:bijJ}.
\end{proof}

\begin{idn}
For $n\ge0$,
\[
J_{n+1}=F_{n+1}+\sum_{j=2}^nJ_{j-1}F_{n+1-j}.
\]
\end{idn}
\begin{proof}
First note that the number of $n$-tilings with no bifences is given by
$S_n=\delta_{0,n}+S_{n-1}+S_{n-2}$ and hence $S_n=F_{n+1}$. How many
$n$-tilings have at least one bifence? Answer~1: $B_n-S_n$. Answer~2:
condition on the last bifence. When the second fence it contains is
the $j$th tile ($j=2,\ldots,n$) then the number of tilings is
$B_{j-2}S_{n-j}$. Summing over all possible $j$ and equating this to
Answer~1 gives
\[
B_n-S_n=\sum_{j=2}^nB_{j-2}S_{n-j}.
\]
The identity follows from
applying $S_n=F_{n+1}$ and Theorem~\ref{T:bijJ}.
\end{proof}

\section{A Pascal-like triangle giving the number of $n$-tilings using
  $k$ fences}

\begin{figure}
\begin{tabular}{c|*{13}{p{1.5em}}}
$n$ $\backslash$ $k$&\mbox{}\hfill0&\mbox{}\hfill1&\mbox{}\hfill2&\mbox{}\hfill3&\mbox{}\hfill4&\mbox{}\hfill5&\mbox{}\hfill6&\mbox{}\hfill7&\mbox{}\hfill8&\mbox{}\hfill9&\mbox{}\hfill10&\mbox{}\hfill11&\mbox{}\hfill12\\\hline
 0~~&\mbox{}\hfill  1 \\
 1~~&\mbox{}\hfill  1 &\mbox{}\hfill  0 \\
 2~~&\mbox{}\hfill  1 &\mbox{}\hfill  1 &\mbox{}\hfill  1 \\
 3~~&\mbox{}\hfill  1 &\mbox{}\hfill  2 &\mbox{}\hfill  2 &\mbox{}\hfill  0 \\
 4~~&\mbox{}\hfill  1 &\mbox{}\hfill  3 &\mbox{}\hfill  4 &\mbox{}\hfill  2 &\mbox{}\hfill  1 \\
 5~~&\mbox{}\hfill  1 &\mbox{}\hfill  4 &\mbox{}\hfill  7 &\mbox{}\hfill  6 &\mbox{}\hfill  3 &\mbox{}\hfill  0 \\
 6~~&\mbox{}\hfill  1 &\mbox{}\hfill  5 &\mbox{}\hfill 11 &\mbox{}\hfill 13 &\mbox{}\hfill  9 &\mbox{}\hfill  3 &\mbox{}\hfill  1 \\
 7~~&\mbox{}\hfill  1 &\mbox{}\hfill  6 &\mbox{}\hfill 16 &\mbox{}\hfill 24 &\mbox{}\hfill 22 &\mbox{}\hfill 12 &\mbox{}\hfill  4 &\mbox{}\hfill  0 \\
 8~~&\mbox{}\hfill  1 &\mbox{}\hfill  7 &\mbox{}\hfill 22 &\mbox{}\hfill 40 &\mbox{}\hfill 46 &\mbox{}\hfill 34 &\mbox{}\hfill 16 &\mbox{}\hfill  4 &\mbox{}\hfill  1 \\
 9~~&\mbox{}\hfill  1 &\mbox{}\hfill  8 &\mbox{}\hfill 29 &\mbox{}\hfill 62 &\mbox{}\hfill 86 &\mbox{}\hfill 80 &\mbox{}\hfill 50 &\mbox{}\hfill 20 &\mbox{}\hfill  5 &\mbox{}\hfill  0 \\
10~~&\mbox{}\hfill  1 &\mbox{}\hfill  9 &\mbox{}\hfill 37 &\mbox{}\hfill 91 &\mbox{}\hfill148 &\mbox{}\hfill166 &\mbox{}\hfill130 &\mbox{}\hfill 70 &\mbox{}\hfill 25 &\mbox{}\hfill  5 &\mbox{}\hfill  1 \\
11~~&\mbox{}\hfill  1 &\mbox{}\hfill 10 &\mbox{}\hfill 46 &\mbox{}\hfill128 &\mbox{}\hfill239 &\mbox{}\hfill314 &\mbox{}\hfill296 &\mbox{}\hfill200 &\mbox{}\hfill 95 &\mbox{}\hfill 30 &\mbox{}\hfill  6 &\mbox{}\hfill  0 \\
12~~&\mbox{}\hfill  1 &\mbox{}\hfill 11 &\mbox{}\hfill 56 &\mbox{}\hfill174 &\mbox{}\hfill367 &\mbox{}\hfill553 &\mbox{}\hfill610 &\mbox{}\hfill496 &\mbox{}\hfill295 &\mbox{}\hfill125 &\mbox{}\hfill 36 &\mbox{}\hfill  6 &\mbox{}\hfill  1 \\
\end{tabular}
\caption{A Pascal-like triangle with entries $\protect\tchb{n}{k}$
  (\seqnum{A059259}).}
\label{f:triB}
\end{figure}

We define $\tchb{n}{k}$ as the number of $n$-tilings which
contain exactly $k$ fences. We define $\tchb{0}{0}=1$ so that the result
\begin{equation}
\label{e:Bn=sum}
B_n=\sum_{k=0}^n\chb{n}{k}
\end{equation}
is valid for $n\geq0$.  The first 12 rows of the triangle whose
entries are $\tchb{n}{k}$ are shown in Figure~\ref{f:triB}. As will be
shown later via its connection with a Riordan array, the
triangle is sequence \seqnum{A059259}.

\begin{idn}For $n\ge0$,
\[
\chb{n}{0}=1.
\]
\end{idn}
\begin{proof}
There is only one way to tile without using any fences.
\end{proof}

\begin{idn}For $n\ge1$,
\[
\chb{n}{1}=n-1.
\]
\end{idn}
\begin{proof}
If only one of the $n$ tiles is a fence, there must be 1 filled fence and $n-2$
free squares making a total of $n-1$
metatile positions. The filled fence can be placed in any of these.
\end{proof}

The following two identities describe, respectively, the entries in
the first and second diagonal of the triangle.
\begin{idn} For $n\ge0$,
\[
\chb{n}{n}=\begin{cases}1,& \text{$n$ even};\\0,& \text{$n$ odd}.\end{cases}
\]
\end{idn}
\begin{proof}
An all-fence tiling must be composed of just
bifences. This can only occur if the number of tiles is even.
\end{proof}

\begin{idn} For $m>0$,
\[
\chb{2m-1}{2m-2}=\chb{2m}{2m-1}=m.
\]
\end{idn}
\begin{proof}
If there are $2m-1$ or $2m$ fences and 1 square, there must be $m-1$
bifences. The remaining metatile is then, respectively, a free square or
a filled fence. There are $m$ possible positions for this remaining
metatile.
\end{proof}

The following identity shows that the third diagonal of the triangle is \seqnum{A002620}.
\begin{idn} For $m>0$,
\[
\chb{2m}{2m-2}=m^2; \quad \chb{2m+1}{2m-1}=m(m+1).
\]
\end{idn}
\begin{proof}
When 2 out of $2m$ tiles are squares there must be either $m-1$
bifences and 2 free squares (totalling $m+1$ metatile positions) or
$m-2$ bifences and 2 filled fences (giving $m$ metatile
positions). There are $\binom{m+1}{2}$ places to put the squares in
the first case and $\binom{m}{2}$ ways to place the filled fences in
the second. The total number of tilings is thus
$\binom{m}{2}+\binom{m+1}{2}=m^2$.  When 2 out of $2m+1$ tiles are
squares, there must be $m-1$ bifences, 1 filled fence, and 1 free
square, and thus $m+1$ metatile positions. There are therefore
$2\binom{m+1}{2}=m(m+1)$ ways to place the free square and filled
fence.
\end{proof}

The following two identities show that the third and fourth columns
of the triangle are
\seqnum{A000124} and \seqnum{A003600}, respectively.
\begin{idn}For $n\ge2$,
\[
\chb{n}{2}=\binom{n-2}{2}+n-1.
\]
\end{idn}
\begin{proof}
If there are 2 fences, there are either 2 filled fences or 1
bifence. In the first case there are $n-4$ free squares and hence a
total of $n-2$ metatile positions in which to place the filled fences. There
are thus $\binom{n-2}{2}$ ways to place the filled fences.
In the
second case there are $n-2$ free squares and thus $n-1$ metatile
positions in which the bifence can be placed.
\end{proof}

\begin{idn}For $n\ge3$,
\[
\chb{n}{3}=\binom{n-3}{3}+2\binom{n-2}{2}.
\]
\end{idn}
\begin{proof}
If there are 3 fences, there are either 3 filled fences or 1
bifence and 1 filled fence. In the first case there are $n-6$ free
squares and 3 filled fences giving a total $n-3$ metatile positions to place
the filled fences. In the second case there are $n-4$ free squares and
thus $n-2$ metatile positions to place the filled fence and bifence.
\end{proof}

\begin{idn}\label{I:bin=sum}
For $n\ge k>0$,
\begin{equation}
\label{e:bin=sum}
\binom{n}{k}=\chb{n}{k}+\chb{n-1}{k-1}.
\end{equation}
\end{idn}
\begin{proof}
Interpret $\binom{n}{k}$ as the tilings of an $(n+k)$-board with $k$
dominoes ($D$) and $n-k$ squares ($S$). Proceeding from left to right
along the board, replace $DD$ by a bifence, $DS$ by a filled fence,
and then leave any of the remaining $S$ as they are.  Except for the
case of a `left over' single $D$
at the right end of the board, this generates all
possible $n$-tilings using $k$ fences. If the $(n+k)$-board ends in an
isolated $D$, ignore it and hence obtain a $(n-1)$-tiling with $k-1$
fences. In both cases the scheme is reversible.
\end{proof}

\begin{idn}\label{I:rrB}
For $n>k>0$,
\begin{equation}
\label{e:rrB}
\chb{n}{k}=\chb{n-1}{k}+\chb{n-1}{k-1}.
\end{equation}
\end{idn}
\begin{proof}
An $n$-tiling such that $n>k$ must contain a free square or filled
fence. Construct a bijection between $n$-tilings using $k$ fences and
$(n-1)$-tilings using $k$ or $k-1$ fences as follows. In the
$n$-tiling find the final square. If it is free, remove it to obtain
an ($n-1$)-tiling with $k$ fences. If the square is part of a filled
fence, remove the fence to obtain an ($n-1$)-tiling with $k-1$ fences.
\end{proof}

\begin{idn}\label{I:triDq}
For $n\ge r\ge0$, $\tchb{n}{n-r}=D^{(r)}_n$.
\end{idn}
\begin{proof}
The result follows from the definition of $D^{(r)}_n$ since
$\tchb{n}{n-r}$ is also the number of $n$-tilings containing $r$
squares.
\end{proof}

\begin{idn}\label{I:rr2B}
\begin{equation}
\label{e:rr2B}
\chb{n}{k}=\delta_{n,0}\delta_{k,0}+\chb{n-1}{k}+\chb{n-2}{k-1}+\chb{n-2}{k-2}.
\end{equation}
\end{idn}
\begin{proof}
We count $\tchb{n}{k}$ by conditioning on the last metatile on the
board. If the metatile contains $m$ tiles of which $j$ are fences, for
the remaining tiles the number of $(n-m)$-tilings is
$\tchb{n-m}{k-j}$. Summing these for the three types of metatile gives
the result.
\end{proof}

A $(p(x),q(x))$ Riordan array is a lower triangular matrix whose
$(n,k)$th entry is the coefficient of $x^n$ in the series for
$p(x)\{q(x)\}^k$ \cite{SGWW91}.

\begin{thm}\label{T:RnkB}
If $R(n,k)$ is the $(n,k)$th entry of the $(1/(1-x^2),x/(1-x))$
Riordan array then
\begin{equation}\label{e:Rnk}
\chb{n}{k}=R(n,n-k).
\end{equation}
\end{thm}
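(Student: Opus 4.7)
The plan is to use Identity~\ref{I:triDq} to recast the claim as one about the column generating functions of $D^{(r)}_n$. Setting $r=n-k$, the assertion $\tchb{n}{k}=R(n,n-k)$ is, via Identity~\ref{I:triDq}, equivalent to $D^{(r)}_n=R(n,r)$ for all $n\ge r\ge 0$. Since the $r$th column of the Riordan array has generating function $\frac{1}{1-x^2}\left(\frac{x}{1-x}\right)^r$ by definition, it suffices to establish
\[
G_r(x):=\sum_{n\ge 0}D^{(r)}_n x^n=\frac{1}{1-x^2}\left(\frac{x}{1-x}\right)^r.
\]

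For $r=0$, the only $n$-tilings with no squares are the all-bifence tilings, so $D^{(0)}_n$ is $1$ if $n$ is even and $0$ if $n$ is odd (with $D^{(0)}_0=1$), giving $G_0(x)=1/(1-x^2)$, as required. For $r\ge 1$, I would multiply the recurrence of Lemma~\ref{L:Dq} by $x^n$ and sum over $n\ge r$, using the boundary fact $D^{(r)}_n=0$ for $n<r$. The $D^{(r)}_{n-2}$ piece contributes $x^2G_r(x)$, while a standard index shift gives $\sum_{n\ge r}\binom{n-1}{r-1}x^n=x^r/(1-x)^r$. Solving $G_r(x)=x^2G_r(x)+x^r/(1-x)^r$ yields exactly $G_r(x)=\frac{x^r}{(1-x^2)(1-x)^r}$, matching the target column.

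The step requiring the most care is the passage from the index-restricted recurrence (valid only for $n\ge r$) to a clean functional equation for $G_r(x)$. One must verify that the omitted low-order contributions on each side genuinely cancel, which follows from $D^{(r)}_n=0$ and $\binom{n-1}{r-1}=0$ in the range $n<r$; given this, the generating-function computation is routine and the identification with the Riordan column is immediate. A sanity check against Identities~\ref{I:rrB} and~\ref{I:rr2B} (which translate under $j=n-k$ into the A-sequence recursion $R(n,j)=R(n-1,j-1)+R(n-2,j-1)+R(n-2,j)$ with base $\delta_{n,0}\delta_{j,0}$) provides an alternative verification should the generating-function route hit any snags.
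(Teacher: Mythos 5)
Your proposal is correct, and it takes a genuinely different route from the paper. The paper proves the theorem by a ``same recurrence, same boundary values'' argument: it uses the relation $q=x+x^2+x^2q$ for $q=x/(1-x)$ to derive the recursion (with a $\delta_{n,0}\delta_{k,0}$ term) satisfied by the Riordan entries $R(n,k)$, and then checks via Identity~\ref{I:rr2B} that $\tchb{n}{k}$, after the index reflection $k\mapsto n-k$, obeys exactly that recursion. You instead reduce the claim through Identity~\ref{I:triDq} to $D^{(r)}_n=R(n,r)$ and verify it by computing the column generating function $G_r(x)=\sum_n D^{(r)}_nx^n$ from Lemma~\ref{L:Dq}, obtaining $G_r(x)=\frac{1}{1-x^2}\bigl(\frac{x}{1-x}\bigr)^r$, which is the $r$th Riordan column by the definition quoted in the paper; your handling of the $r=0$ column and of the vanishing low-order terms ($D^{(r)}_{n<r}=0$, $\binom{n-1}{r-1}=0$ for $n<r$) is exactly the care needed to make the functional equation legitimate. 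Your route is arguably more direct, since it works straight from the coefficient-extraction definition of a Riordan array and simultaneously establishes the combinatorial interpretation $R(n,k)=D^{(k)}_n$, which the paper only records as a remark after the theorem; the paper's route, by contrast, avoids generating-function manipulation of the tiling numbers themselves and runs in parallel with its later proof of Theorem~\ref{T:Rnk} for the odd rows of the $n$-board triangle. Your closing ``sanity check'' is indeed just the paper's argument in disguise (and note the paper's displayed recursion has a typographical duplication of the $R(n-1,k-1)$ term; the correct relation is $R(n,k)=R(n-1,k-1)+R(n-2,k-1)+R(n-2,k)$ plus the delta term, which is what your translation of Identity~\ref{I:rr2B} gives).
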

\begin{proof}
Let $p=1/(1-x^2)$, $q=x/(1-x)$. Then $R(n-l,k-j)$ is the coefficient
of $x^n$ in the expansion of $x^lpq^{k-j}$. Multiplying the identity
$q=x+x^2+x^2q$ by $pq^{k-1}$ and taking the coefficient of
$x^n$ gives $R(n,k)=R(n-1,k-1)+R(n-1,k-1)+R(n-2,k-1)+R(n-2,k)$
for $n>2$, $k>0$. Taking $R(n<0,k)=R(n<k,k)=0$ and including terms to
arrive at a relation that is also compatible with the values of
$R(k,n)$ for $0\le n\le 2$ and $k=0$ gives
\begin{equation}\label{e:Rrr}
R(n,k)=\delta_{n,0}\delta_{k,0}
+R(n-1,k-1)+R(n-1,k-1)+R(n-2,k-1)+R(n-2,k),
\end{equation}
which is then valid for all $n$ and $k$. Substituting \eqref{e:Rnk}
into \eqref{e:rr2B}, replacing $k$ by $n-k$, and noting that
$\delta_{n,0}\delta_{n-k,0}$ can be rewritten as $\delta_{n,0}\delta_{k,0}$,
gives \eqref{e:Rrr}.
\end{proof}

From Identity~\ref{I:triDq}, $R(n,k)=D^{(k)}_n$. In other words, a
combinatorial interpretation of $R(n,k)$ is the number of $n$-tilings
that use $k$ squares (and $n-k$ $(1,1)$-fences). Then from
Lemma~\ref{L:Dq} we have for $n\ge k\ge0$,
\begin{equation}
R(n,k)=R(n-2,k)+\binom{n-1}{k-1}.
\end{equation}
This allows us to prove a conjecture given in the OEIS entry for
\seqnum{A059259} concerning
\seqnum{A071921} which is the square array $a(n,m)$ given by
$a(0,m\ge0)=1$,
\begin{equation}\label{e:anm}
a(n,m)=\sum_{r=0}^{m-1}\binom{n-1+2r}{n-1}.
\end{equation}
Using our notation, the conjecture is as follows.
\begin{idn}
\[
\chb{n+2m}{2m}=a(n,m+1).
\]
\end{idn}
\begin{proof}
From Theorem~\ref{T:RnkB}, $\tchb{n+2m}{2m}=R(n+2m,n)$. Repeatedly
applying \eqref{e:Dq} gives
\[
R(n+2m,m)=\binom{n-1+2m}{n-1}+\binom{n-1+2(m-1)}{n-1}+\cdots
+\binom{n-1+2}{n-1}+R(n,n).
\]
Using the fact that $R(n,n)=1$ the result follows from \eqref{e:anm}.
\end{proof}

If $b$, $f$, and $s$ are, respectively,
the numbers of bifences, filled fences, and
free squares in an $n$-tiling using $k$ fences then
it is easily seen that
\begin{subequations}
\label{e:nkbfsB}
\begin{align}
\label{e:nB}
n&=2b+2f+s, \\
k&=2b+f.
\label{e:kB}
\end{align}
\end{subequations}

\begin{idn}\label{I:gfB}
\begin{equation}
\chb{n}{k}=\begin{cases}
\displaystyle\sum_{b=b\rb{min}}^{b\rb{max}} \binom{n-k+b}{k-b}\binom{k-b}{b},
&b\rb{min}\leq b\rb{max},\\
0, & b\rb{min}>b\rb{max},
\end{cases}
\label{e:gfB}
\end{equation}
where $b\rb{min}=\max(0,\ceil{k-n/2})$ and
$b\rb{max}=\floor{k/2}$.
\end{idn}
\begin{proof}
For given values of $n$ and $k$ we sum the number of tilings for all
possible values of $b$. The maximum number of
bifences $b\rb{max}$
is obtained from \eqref{e:kB} when $f$ is 0 or 1 depending on
whether $k$ is even or odd, respectively. Eliminating $f$ from
\eqref{e:nkbfsB} gives
\[
b=\tfrac12(2k-n+s).
\]
If $2k-n$ is negative, the minimum possible value of $b$ is
zero. Otherwise $b\rb{min}$ is obtained when $s$ is 0 or 1 when
$2k-n$ is even or odd, respectively.
From \eqref{e:nkbfs} we have that the total number of metatiles,
$b+f+s=n-k+b$. The number of ways of tiling using $b$ bifences, $f$
filled fences, and $s$ free squares is the multinomial coefficient
$\binom{b+f+s}{b,\,f,\,s}$ which may be re-expressed as a product of
binomial coefficients written in terms of $b$, $n$, and $k$.  There
will be no possible values of $b$ and therefore no tilings if
$b\rb{min}>b\rb{max}$.
\end{proof}

\begin{cor}
\begin{align*}
J_{n+1}&=\sum_{k=0}^n\sum_{b=\max(0,\ceil{k-n/2})}^{\floor{k/2}} \binom{n-k+b}{k-b}\binom{k-b}{b}.
\end{align*}
\end{cor}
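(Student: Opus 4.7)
The proof is essentially an assembly of three results already established in the paper, so the plan is to combine them in the natural order. First I would invoke Theorem~\ref{T:bijJ} to rewrite $J_{n+1}$ as $B_n$, the total number of $n$-tilings using squares and $(1,1)$-fences. Next I would apply the decomposition \eqref{e:Bn=sum}, which partitions these tilings according to the number of fences used, giving $B_n=\sum_{k=0}^n\tchb{n}{k}$.

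The substantive ingredient is Identity~\ref{I:gfB}, which supplies the closed form for $\tchb{n}{k}$ as a single sum over the number of bifences $b$ with the binomial product $\binom{n-k+b}{k-b}\binom{k-b}{b}$, where $b$ ranges from $\max(0,\lceil k-n/2\rceil)$ to $\lfloor k/2\rfloor$. Substituting this expression into \eqref{e:Bn=sum} produces the claimed double sum.

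The only mild subtlety is that for certain pairs $(n,k)$ we have $b\rb{min}>b\rb{max}$, in which case Identity~\ref{I:gfB} sets the inner sum to zero. In the statement of the corollary this is handled implicitly by the convention that an empty sum is zero, so no separate case analysis is needed: one can simply cite \eqref{e:gfB} and note that terms with $b\rb{min}>b\rb{max}$ contribute nothing. I do not anticipate any genuine obstacle; the proof is a one-line chain of substitutions, and the work has all been done in establishing Theorem~\ref{T:bijJ} and Identity~\ref{I:gfB}.
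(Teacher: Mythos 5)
Your proposal is correct and matches the paper's own proof, which likewise just cites Theorem~\ref{T:bijJ}, the decomposition \eqref{e:Bn=sum}, and Identity~\ref{I:gfB}. Your extra remark about the empty-sum convention when $b\rb{min}>b\rb{max}$ is a harmless clarification.
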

\begin{proof}
The result follows from \eqref{e:Bn=sum}, Theorem~\ref{T:bijJ}, and
Identity~\ref{I:gfB}.
\end{proof}

The $(n,k)$th entry, which we will denote here by $\tch{n}{k}_{1/2}$,
of the Pascal-like triangle \seqnum{A123521} is the number of ways to
tile an $n$-board using $k$ $(\frac12,\frac12)$-fences and $2(n-k)$
half-squares (with the shorter sides always horizontal) \cite{EA20}.
We now show that the $\tch{n}{k}_{1/2}$ triangle can be obtained from
the $\tchb{n}{k}$ triangle by removing the odd downward diagonals of
the latter which is equivalent to the following identity.

\begin{idn}\label{I:nkhalf} For $n\ge k\ge0$,
\[
\ch{n}{k}_{1/2}=\chb{2n-k}{k}.
\]
\end{idn}
\begin{proof}
The total post length of a $(\frac12,\frac12)$-fence is 1. The entry
$\tch{n}{k}_{1/2}$ can also be viewed as counting the number of
tilings that use $k$ $(\frac12,\frac12)$-fences and $2(n-k)$
half-squares since the total length occupied by the $n$ tiles is
$k+2(n-k)\frac12=n$. The entry $\tchb{2n-k}{k}$ counts the number of tilings
using $k$ $(1,1)$-fences and $2(n-k)$ squares. This latter tiling differs
from the former only in that the tiles are twice the length.
\end{proof}

\section{A Pascal-like triangle giving the number of tilings of an
  $n$-board using $k$ fences}

We define $\tch{n}{k}$ as the number of tilings of an $n$-board which
contain exactly $k$ fences (Fig.~\ref{f:tri}).
We define $\tch{0}{0}=1$ so that the result
\begin{equation}
\label{e:An=sum}
A_n=\sum_{k=0}^n\ch{n}{k}
\end{equation}
is valid for $n\geq0$.

As a result of the following identity, the upward diagonals of the
$\tchb{n}{k}$ triangle are the rows of the $\tch{n}{k}$
triangle. Equivalently, column $k$ of the $\tch{n}{k}$ triangle is
obtained by displacing column $k$ of the $\tchb{n}{k}$ triangle
downwards by $k$ (and filling the entries above with zeros). Thus we
again obtain sequences \seqnum{A000124} and \seqnum{A003600} for the $k=2$
and $k=3$ columns, respectively (Identities \ref{I:k=2} and \ref{I:k=3}). 

\begin{idn}\label{I:ch=chb}
\[
\ch{n}{k}=\chb{n-k}{k}.
\]
\end{idn}
\begin{proof}
If a tiling contains $n-k$ tiles of which $k$ are fences, the total
length is $n$.
\end{proof}

The even rows of the triangle $\tch{n}{k}$ give the triangle
$\tch{n}{k}_{1/2}$ (defined just before Identity~\ref{I:nkhalf}).

\begin{idn}\label{I:2nk}
\[
\ch{2n}{k}=\ch{n}{k}_{1/2}.
\]
\end{idn}
\begin{proof}
The number of tilings of a $2n$-board with squares and $(1,1)$-fences is the
same as the number of tilings of an $n$-board with tiles of half the length.
\end{proof}

\begin{figure}
\begin{tabular}{c|*{13}{p{1.5em}}}
$n$ $\backslash$ $k$&\mbox{}\hfill0&\mbox{}\hfill1&\mbox{}\hfill2&\mbox{}\hfill3&\mbox{}\hfill4&\mbox{}\hfill5&\mbox{}\hfill6&\mbox{}\hfill7&\mbox{}\hfill8&\mbox{}\hfill9&\mbox{}\hfill10&\mbox{}\hfill11&\mbox{}\hfill12\\\hline
 0~~&\mbox{}\hfill  1 \\
 1~~&\mbox{}\hfill  1 &\mbox{}\hfill  0 \\
 2~~&\mbox{}\hfill  1 &\mbox{}\hfill  0 &\mbox{}\hfill  0 \\
 3~~&\mbox{}\hfill  1 &\mbox{}\hfill  1 &\mbox{}\hfill  0 &\mbox{}\hfill  0 \\
 4~~&\mbox{}\hfill  1 &\mbox{}\hfill  2 &\mbox{}\hfill  1 &\mbox{}\hfill  0 &\mbox{}\hfill  0 \\
 5~~&\mbox{}\hfill  1 &\mbox{}\hfill  3 &\mbox{}\hfill  2 &\mbox{}\hfill  0 &\mbox{}\hfill  0 &\mbox{}\hfill  0 \\
 6~~&\mbox{}\hfill  1 &\mbox{}\hfill  4 &\mbox{}\hfill  4 &\mbox{}\hfill  0 &\mbox{}\hfill  0 &\mbox{}\hfill  0 &\mbox{}\hfill  0 \\
 7~~&\mbox{}\hfill  1 &\mbox{}\hfill  5 &\mbox{}\hfill  7 &\mbox{}\hfill  2 &\mbox{}\hfill  0 &\mbox{}\hfill  0 &\mbox{}\hfill  0 &\mbox{}\hfill  0 \\
 8~~&\mbox{}\hfill  1 &\mbox{}\hfill  6 &\mbox{}\hfill 11 &\mbox{}\hfill  6 &\mbox{}\hfill  1 &\mbox{}\hfill  0 &\mbox{}\hfill  0 &\mbox{}\hfill  0 &\mbox{}\hfill  0 \\
 9~~&\mbox{}\hfill  1 &\mbox{}\hfill  7 &\mbox{}\hfill 16 &\mbox{}\hfill 13 &\mbox{}\hfill  3 &\mbox{}\hfill  0 &\mbox{}\hfill  0 &\mbox{}\hfill  0 &\mbox{}\hfill  0 &\mbox{}\hfill  0 \\
10~~&\mbox{}\hfill  1 &\mbox{}\hfill  8 &\mbox{}\hfill 22 &\mbox{}\hfill 24 &\mbox{}\hfill  9 &\mbox{}\hfill  0 &\mbox{}\hfill  0 &\mbox{}\hfill  0 &\mbox{}\hfill  0 &\mbox{}\hfill  0 &\mbox{}\hfill  0 \\
11~~&\mbox{}\hfill  1 &\mbox{}\hfill  9 &\mbox{}\hfill 29 &\mbox{}\hfill 40 &\mbox{}\hfill 22 &\mbox{}\hfill  3 &\mbox{}\hfill  0 &\mbox{}\hfill  0 &\mbox{}\hfill  0 &\mbox{}\hfill  0 &\mbox{}\hfill  0 &\mbox{}\hfill  0 \\
12~~&\mbox{}\hfill  1 &\mbox{}\hfill 10 &\mbox{}\hfill 37 &\mbox{}\hfill 62 &\mbox{}\hfill 46 &\mbox{}\hfill 12 &\mbox{}\hfill  1 &\mbox{}\hfill  0 &\mbox{}\hfill  0 &\mbox{}\hfill  0 &\mbox{}\hfill  0 &\mbox{}\hfill  0 &\mbox{}\hfill  0 \\
\end{tabular}

\caption{A Pascal-like triangle with entries $\protect\tch{n}{k}$ (\seqnum{A335964}).}
\label{f:tri}
\end{figure}

\begin{idn}For $n>0$,
\[
\ch{n}{0}=1.
\]
\end{idn}
\begin{proof}
There is only one way to tile a board without using any fences.
\end{proof}

\begin{idn}For $n>2$,
\[
\ch{n}{1}=n-2.
\]
\end{idn}
\begin{proof}
If there is only one fence, there must be 1 filled fence (and $n-3$
free squares). The filled fence can be placed in any of the $n-2$
metatile positions.
\end{proof}

\begin{idn}\label{I:k=2} For $n>3$,
\[
\ch{n}{2}=\binom{n-4}{2}+n-3.
\]
\end{idn}
\begin{proof}
If there are 2 fences, there are either 2 filled fences or 1
bifence. In the first case there are $n-6$ free squares and thus a
total of $n-4$ metatile positions to place the filled fences. There
are thus $\binom{n-4}{2}$ ways to place the filled fences.
In the
second case there are $n-4$ free squares and thus $n-3$ metatile
positions in which the bifence can be placed.
\end{proof}

\begin{idn}\label{I:k=3}For $n>5$,
\[
\ch{n}{3}=\binom{n-6}{3}+2\binom{n-5}{2}.
\]
\end{idn}
\begin{proof}
If there are 3 fences, there are either 3 filled fences or 1
bifence and a filled fence. In the first case there are $n-9$ free
squares and 3 filled fences giving a total $n-6$ metatile positions
for
the filled fences. In the second case there are $n-7$ free squares and
thus $n-5$ metatile positions for the filled fence and bifence.
\end{proof}

\begin{idn}\label{I:triCq}
For $n\ge r\ge0$, $\tch{2n+1}{n-r}=C^{(r)}_n$.
\end{idn}
\begin{proof}
The result follows from the definition of $C^{(r)}_n$ since
$\tch{2n+1}{n-r}$ is also the number of ways to tile a $(2n+1)$-board
using $2r+1$ squares.
\end{proof}

\begin{idn}\label{I:rr}
\begin{equation}
\label{e:rr}
\ch{n}{k}=\ch{n-1}{k}+\ch{n-3}{k-1}+\ch{n-4}{k-2}+\delta_{0,k}\delta_{0,n}.
\end{equation}
\end{idn}
\begin{proof}
We count $\tch{n}{k}$ by conditioning on the last metatile on the
board. If the metatile is of length $l$ and contains $j$ fences, the
number of ways to tile the remaining $n-l$ cells with $k-j$ fences is
$\tch{n-l}{k-j}$. Summing these for the three types of metatile gives
the result.
\end{proof}

To show that $C^{(r)}_n$ is a Riordan array we first need a recursion
relation that involves only the odd rows of the triangle.
\begin{idn}\label{I:rrodd}
\begin{equation}
\label{e:rrodd}
\ch{2n+1}{k}=\ch{2n-1}{k}+\ch{2n-1}{k-1}+\ch{2n-3}{k-1}
+\ch{2n-3}{k-2}-\ch{2n-5}{k-3}+\delta_{0,k}\delta_{0,n}.
\end{equation}
\end{idn}
\begin{proof}
Let $E(n,k)$ denote \eqref{e:rr}. Then $E(2n+1,k)+E(2n,k)-E(2n-1,k-1)$
gives the identity.
\end{proof}

\begin{thm}\label{T:Rnk}
If $\bar{R}(n,k)$ is the $(n,k)$th entry of the $(1/[(1-x)(1-x^2)],x/(1-x)^2)$
Riordan array then
\begin{equation}\label{e:Rbnk}
\ch{2n+1}{k}=\bar{R}(n,n-k).
\end{equation}
\end{thm}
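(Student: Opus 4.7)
The plan is to mirror the strategy used in the proof of Theorem~\ref{T:RnkB}: set $p(x) = 1/[(1-x)(1-x^2)]$ and $q(x) = x/(1-x)^2$ so that $\bar{R}(n-l, k-j)$ is the coefficient of $x^n$ in $x^l p q^{k-j}$, find a single algebraic identity relating $p$ and $q$ that encodes the desired recursion, extract coefficients, and then verify the result is compatible with the recursion \eqref{e:rrodd} for $\tch{n}{k}$ after the substitution \eqref{e:Rbnk}.

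First I would make the substitution \eqref{e:Rbnk} explicit inside \eqref{e:rrodd}. Writing $\ch{2m+1}{k'} = \bar{R}(m, m-k')$ and tracking how each of $\ch{2n\pm\cdots}{k-\cdots}$ transforms, then relabelling via $k' = n-k$, I expect that proving \eqref{e:Rbnk} amounts to showing the Riordan recursion
\begin{equation*}
\bar{R}(n,k) = \bar{R}(n-1,k) + \bar{R}(n-2,k) - \bar{R}(n-3,k)
+ \bar{R}(n-1,k-1) + \bar{R}(n-2,k-1) + \delta_{n,0}\delta_{k,0},
\end{equation*}
valid for all $n,k \ge 0$ under the convention that $\bar{R}$ vanishes on negative indices.

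Next I would establish this recursion generating-functionally. The key algebraic identity is
\begin{equation*}
q(1-x)(1-x^2) = x(1+x),
\end{equation*}
which is immediate from $q = x/(1-x)^2$: the left side equals $x(1-x)(1-x^2)/(1-x)^2 = x(1-x^2)/(1-x) = x(1+x)$. Multiplying through by $pq^{k-1}$ and extracting the coefficient of $x^n$ yields the recursion above with no delta term, valid for $k \ge 1$. For the boundary column $k=0$, I would use $p(1-x)(1-x^2) = 1$ directly, which gives $\bar{R}(n,0) - \bar{R}(n-1,0) - \bar{R}(n-2,0) + \bar{R}(n-3,0) = \delta_{n,0}$, precisely matching the $k=0$ case once the $k-1 = -1$ terms are set to zero.

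The main obstacle I anticipate is purely bookkeeping: carefully carrying out the index substitution $\ch{2m+1}{k'} \mapsto \bar{R}(m,m-k')$ in every term of \eqref{e:rrodd}, checking that the minus sign and the delta term land correctly, and confirming that the boundary conventions ($\bar{R}(n,k) = 0$ for $n<0$, $k<0$, or $k > n$) are consistent with the lattice of cases produced by the recursion. No new combinatorial insight is required beyond what is already in Identity~\ref{I:rrodd}; the proof is effectively a translation between the combinatorial recursion for $\tch{2n+1}{k}$ and the generating-function recursion of the Riordan array.
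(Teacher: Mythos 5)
Your proposal is correct and follows essentially the same route as the paper: both arguments show that, after the substitution $\ch{2n+1}{k}=\bar{R}(n,n-k)$ and the relabelling $k\mapsto n-k$, the combinatorial recursion \eqref{e:rrodd} becomes exactly the five-term Riordan recursion \eqref{e:Rbrr} with the $\delta_{n,0}\delta_{k,0}$ term, so the two arrays coincide by induction under the same vanishing conventions. The only difference is that you derive that Riordan recursion yourself from $q(1-x)(1-x^2)=x(1+x)$ (and handle the $k=0$ column via $p(1-x)(1-x^2)=1$), whereas the paper quotes it from \cite{EA20} and appends the delta term; your version is slightly more self-contained but not a different method.
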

\begin{proof}
Let $p=1/[(1-x)(1-x^2)]$, $q=x/(1-x)^2$. The recursion relation
without incorporating boundary conditions for a Riordan array with
this particular $q$ is
$\bar{R}(n,k)=\bar{R}(n-1,k)+\bar{R}(n-1,k-1)+\bar{R}(n-2,k)+\bar{R}(n-2,k-1)-\bar{R}(n-3,k)$
for $n>2$, $k>0$ \cite{EA20}. Including terms to obtain a relation
valid for all $n$ and $k$ (and taking $R(n,k)=0$ if $n<0$ or $n<k$) we
arrive at
\begin{equation}\label{e:Rbrr}
\bar{R}(n,k)=\delta_{n,0}\delta_{k,0}
+\bar{R}(n-1,k)+\bar{R}(n-1,k-1)+\bar{R}(n-2,k)+\bar{R}(n-2,k-1)-\bar{R}(n-3,k).
\end{equation}
Substituting \eqref{e:Rbnk}
into \eqref{e:rrodd}, replacing $k$ by $n-k$, and noting that
$\delta_{n,0}\delta_{n-k,0}$ can be rewritten as $\delta_{n,0}\delta_{k,0}$,
gives \eqref{e:Rbrr}.
\end{proof}

From Identity~\ref{I:triCq}, $\bar{R}(n,k)=C^{(k)}_n$. In other words, a
combinatorial interpretation of $\bar{R}(n,k)$ is the number of tilings of a
$(2n+1)$-board
that use $2k+1$ squares (and $2(n-k)$ $(1,1)$-fences). Then from
Lemma~\ref{L:Cq} we have for $n\ge k\ge0$,
\begin{equation}
\bar{R}(n,k)=\bar{R}(n-2,k)+\binom{n+k}{2k}.
\end{equation}

If $b$, $f$, and $s$ are, respectively,
the numbers of bifences, filled fences, and
free squares in a tiling of an $n$-board using $k$ fences then
it is easily seen that
\begin{subequations}
\label{e:nkbfs}
\begin{align}
\label{e:n}
n&=4b+3f+s, \\
k&=2b+f.
\label{e:k}
\end{align}
\end{subequations}

\begin{idn}For $m\geq0$,
\[
\ch{4m+p}{2m+q}=\begin{cases}
1, & p=q=0;\\
m+1, & \text{$p=1$, $q=0$ or $p=3$, $q=1$};\\
(m+1)^2,&\text{$p=2$, $q=0$};
\end{cases}
\]
and for $m>0$,
\[
\ch{4m+p}{2m+q}=\begin{cases}
m,&\text{$p=-1$, $q=-1$};\\
m(m+1),&\text{$p=0$, $q=-1$}.
\end{cases}
\]
\end{idn}
\begin{proof}
Substituting $n=4m+p$ and $k=2m+q$ into \eqref{e:nkbfs} and
eliminating $m$ gives
\[
p-2q=f+s,
\]
in which the only possible values of $f$ and $s$ are non-negative integers.
The number of bifences may then be expressed as
\begin{equation}
\label{e:bfspq}
b=m+\tfrac12(q-f).
\end{equation}
Thus when $p=q=0$ we must have $f=s=0$ which corresponds to a tiling
using $m$ bifences only. When $p=1$, $q=0$ then we must have $f=0$,
$s=1$ since \eqref{e:bfspq} would imply a non-integral number of
bifences if $f=1$, $s=0$.  With the allowed case there are $m+1$
metatile positions in which to place the free square.  When $p=3$,
$q=1$ we must have $f=1$, $s=0$ and again there are $m+1$ places for
the filled fence. For $p=2$, $q=0$, either $s=2$, $f=0$ or $s=0$,
$f=2$. In the first case there are $\binom{m+2}{2}$ ways of placing
the 2 free squares. In the second there are $m-1$ bifences and hence
$\binom{m+1}{2}$ ways of placing the 2 filled fences. Adding gives the
required result. With $p=q=-1$ the only possibility is $s=0$,
$f=1$. There are then $m-1$ bifences and hence $m$ ways to place the
filled fence. Finally, if $p=0$, $q=-1$ we must have $f=s=1$. With
$m+1$ metatile positions, there are a total of $2\binom{m+1}{2}$ ways
to place the filled fence and free square.
\end{proof}

\begin{idn}\label{I:gf}
\begin{equation}
\label{e:gf}
\ch{n}{k}=\begin{cases}
\displaystyle\sum_{b=b\rb{min}}^{b\rb{max}} \binom{n-2k+b}{k-b}\binom{k-b}{b},
&b\rb{min}\leq b\rb{max},\\
0, & b\rb{min}>b\rb{max},
\end{cases}
\end{equation}
where $b\rb{min}=\max(0,\lceil\frac12(3k-n)\rceil)$ and
$b\rb{max}=\floor{k/2}$.
\end{idn}
\begin{proof}
For given values of $n$ and $k$ we sum the number of tilings for all
possible values of $b$, the number of bifences. The maximum number of
bifences is obtained from \eqref{e:k} when $f$ is 0 or 1 depending on
whether $k$ is even or odd, respectively. Eliminating $f$ from
\eqref{e:nkbfs} gives
\[
b=\tfrac12(3k-n+s).
\]
If $3k-n$ is negative, the minimum possible value of $b$ is
zero. Otherwise $b\rb{min}$ is obtained when $s$ is 0 or 1 when
$3k-n$ is even or odd, respectively.
From \eqref{e:nkbfs} we have that the total number of metatiles,
$b+f+s=n-2k+b$. The proof is then the same as for Identity~\ref{I:gfB}.
\end{proof}

\begin{cor}
\begin{align*}
f_n^2&=\sum_{k=0}^{2n}\sum_{b=\max(0,\ceil{\frac12(3k-2n)})}^{\floor{k/2}} \binom{2n-2k+b}{k-b}\binom{k-b}{b},\\
f_nf_{n+1}&=\sum_{k=0}^{2n+1}\sum_{b=\max(0,\ceil{\frac12(3k-2n-1)})}^{\floor{k/2}} \binom{2n+1-2k+b}{k-b}\binom{k-b}{b}.
\end{align*}
\end{cor}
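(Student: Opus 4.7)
The plan is a direct assembly of three earlier results. Starting from the expansion \eqref{e:An=sum}, namely $A_n=\sum_{k=0}^n\tch{n}{k}$, I would substitute the closed form for $\tch{n}{k}$ given by Identity~\ref{I:gf} to obtain
\[
A_n=\sum_{k=0}^n\sum_{b=b\rb{min}}^{b\rb{max}}\binom{n-2k+b}{k-b}\binom{k-b}{b},
\]
with $b\rb{min}=\max(0,\ceil{\tfrac12(3k-n)})$ and $b\rb{max}=\floor{k/2}$, understanding as in Identity~\ref{I:gf} that the inner sum is empty (and contributes zero) whenever $b\rb{min}>b\rb{max}$.

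Next, I would specialize to the two parities of $n$. Setting $n\mapsto 2n$ in the display above replaces the upper limit of the outer sum by $2n$, sends $n-2k+b$ to $2n-2k+b$, and rewrites $b\rb{min}$ as $\max(0,\ceil{\tfrac12(3k-2n)})$, matching exactly the inner summand and index ranges in the first asserted equation. Invoking \eqref{e:A2n}, the left side $A_{2n}$ becomes $f_n^2$. Setting $n\mapsto 2n+1$ instead gives upper limit $2n+1$, summand exponent $2n+1-2k+b$, and $b\rb{min}=\max(0,\ceil{\tfrac12(3k-2n-1)})$, which matches the second asserted equation; \eqref{e:A2n+1} then identifies $A_{2n+1}$ with $f_nf_{n+1}$.

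There is no real obstacle here; the work is essentially bookkeeping. The only thing to be careful about is that the zero-contribution convention when $b\rb{min}>b\rb{max}$ carries over faithfully so that the outer sums can be written over the uniform ranges $0\le k\le 2n$ and $0\le k\le 2n+1$ without having to excise individual $k$ for which no $b$ exists. Since Identity~\ref{I:gf} already defines $\tch{n}{k}=0$ in precisely those cases, no extra argument is required and the corollary follows at once from \eqref{e:An=sum}, Theorem~\ref{T:A2n}, and Identity~\ref{I:gf}.
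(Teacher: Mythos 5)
Your proposal is correct and follows exactly the paper's route: substitute the closed form of Identity~\ref{I:gf} into \eqref{e:An=sum} and then specialize $n$ to $2n$ and $2n+1$, using Theorem~\ref{T:A2n} to identify $A_{2n}=f_n^2$ and $A_{2n+1}=f_nf_{n+1}$. The attention to the empty-sum convention when $b\rb{min}>b\rb{max}$ is a sensible, if routine, addition.
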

\begin{proof}
The result follows from \eqref{e:An=sum},
Theorem~\ref{T:A2n}, and Identity~\ref{I:gf}.
\end{proof}

\begin{idn}\label{I:2n+1kbin} For $n\ge k\ge 0$,
\[
\ch{2n+1}{k}=\sum_{j=k-m}^m\binom{n+1-j}{j}\binom{n-(k-j)}{k-j},
\]
where $m=\min(\floor{(n+1)/2},k)$.
\end{idn}
\begin{proof}
From Lemma~\ref{L:bij}, $\tch{2n+1}{k}$ is also the number of
square-domino tilings of an $(n+1)$-board and an $n$-board using $k$
dominoes in total. The number of ways to tile an $(n+1)$-board with
$j$ dominoes (and $n+1-2j$ squares) is $\binom{n+1-j}{j}$. If the
$(n+1)$-board has $j$ dominoes then the $n$-board will have $k-j$
dominoes (and $n-2(k-j)$ squares). Hence there are
$\binom{n+1-j}{j}\binom{n-(k-j)}{k-j}$ ways to tile the boards if
the $(n+1)$-board has $j$ dominoes. Evidently $j$ cannot exceed $k$ or
$\floor{(n+1)/2}$ and so $m\ge j\ge k-m$. We then sum over all
possible values of $j$.
\end{proof}

\begin{idn}\label{I:2nkbin} For $n\ge k\ge 0$,
\[
\ch{2n}{k}=\sum_{j=k-m}^m\binom{n-j}{j}\binom{n-(k-j)}{k-j},
\]
where $m=\min(\floor{n/2},k)$.
\end{idn}
\begin{proof}
The proof is analogous to that of Identity~\ref{I:2n+1kbin}.
\end{proof}
Identity~\ref{I:2nkbin} is equivalent to Identity~3.2 in
\cite{EA20}. Summing Identities \ref{I:2n+1kbin} and
\ref{I:2nkbin} over all possible $k$ will, respectively, give
alternative ways of expressing $f_nf_{n+1}$ and $f_n^2$ as double sums
of products of two binomial coefficients.

\bigskip
\hrule
\bigskip

\noindent 2010 {\it Mathematics Subject Classification}:
Primary  05A19, 11B39

\noindent \emph{Keywords: }
Fibonacci identities, Jacobsthal identities,
combinatorial proof, combinatorial identities, $n$-tiling,
Pascal-like triangle, Riordan array

\bigskip
\hrule
\bigskip

\noindent (Concerned with sequences \seqnum{A000045},
\seqnum{A000124}, \seqnum{A000930}, \seqnum{A001045},
\seqnum{A001654}, \seqnum{A002620}, \seqnum{A003269},
\seqnum{A003600}, \seqnum{A006498}, \seqnum{A007598},
\seqnum{A017817}, \seqnum{A059259}, \seqnum{A059260},
\seqnum{A071921}, \seqnum{A123521}, \seqnum{A158909}, and
\seqnum{A335964}.)

\bigskip
\hrule
\bigskip

\end{document}